\providecommand{\U}[1]{\protect \rule{.1in}{.1in}}
\newtheorem{theorem}{Theorem}[section]
\newtheorem{corollary}[theorem]{Corollary}
\newtheorem{definition}[theorem]{Definition}
\newtheorem{example}[theorem]{Example}
\newtheorem{lemma}[theorem]{Lemma}
\newtheorem{proposition}[theorem]{Proposition}
\newtheorem{remark}[theorem]{Remark}
\newenvironment{proof}[1][Proof]{\noindent \textbf{#1.} }{\  \rule{0.5em}{0.5em}}
\def\lt{\left}
\def\rt{\right}
\def\br{\mathbb{R}}
\def\cp{\mathcal{P}}
\def\cf{\mathcal{F}}
\def\cf{\mathcal{F}}
\def\pe{{\mathbb{E}^{\cp}}}
\def\bbl{{\boldsymbol{\lambda}}}
\def\bbm{\boldsymbol{\mu}}
\def\bbk{\boldsymbol{\kappa}}
\def\ccp{{\rm{co}(\cp)}}
\def\ccp{{\rm{co}(\cp)}}
\begin{document}

\title{On the calculation of upper variance under multiple probabilities
\footnote{This work was supported by NSF of Shandong Provence (No.ZR2021MA018),  NSF of China (No.11601281),  National Key R\&D Program of China (No.2018YFA0703900) and the Qilu Young Scholars Program of Shandong University.}}

\author{Xinpeng Li\footnote{Research Center for Mathematics and Interdisciplinary Sciences, Shandong University, 266237, Qingdao, China.
  Email:lixinpeng@sdu.edu.cn} \ \ \
 Miao Yu\footnote{Research Center for Mathematics and Interdisciplinary Sciences, Shandong University, 266237, Qingdao, China
 .}  \ \ \
 Shiyi Zheng\footnote{School of Mathematics, Shandong University, 250100, Jinan, China.}
}
\date{ }

\maketitle

\abstract{The notion of upper variance under multiple probabilities is defined by a corresponding minimax optimization problem. This paper proposes a simple algorithm  to solve the related minimax optimization problem exactly. As an application, we provide the probabilistic representation for a class of quadratic programming problems. }

\textbf{Keywords}: Multiple probabilities; Quadratic programming; Sublinear expectation; Upper variance

%MSC2010:{60H05, 60H10, 60H30, 60H40, {{60G60, 60G65}}}

\section{Introduction}
The notion of upper and lower variances is a generalization of classical notion of variance to the setting of multiple probabilities or the problems with uncertainty. For example, let $X$ be a random variable representing the daily return of one stock. Without loss of generality, we assume $X\sim N(0.1,0.4)$ in the bull market and $X\sim N(-0.1,0.4)$ in the bear market respectively. In fact, the parameters of mean and variance can be estimated from the historical data of the stock. We hope to estimate the ``risk" (variance) of the stock after one month, but we do not know the true state (bull or bear) of the stock market in the future. In this case, the notion of upper and lower variances is a good measure for such risk. More precisely, let $\cp=\{P_1,\cdots,P_K\}$ be a set of probability measures on measurable space $(\Omega,\cf)$. For each random variable $X$ with $\max_{1\leq i\leq K}E_{P_i}[X^2]<\infty$, the upper and lower variances of $X$ under $\cp$ are defined respectively by
$$\overline{V}(X)=\min_{\mu\in\br}\max_{1\leq i\leq K}E_{P_i}[(X-\mu)^2], \ \ \ \ \underline{V}(X)=\min_{\mu\in\br}\min_{1\leq i\leq K}E_{P_i}[(X-\mu)^2].$$
Walley \cite{walley1991statistical} pointed out that it is quite easy to calculate the lower variance, but
more difficult to calculate the upper variance (see Appendix G in \cite{walley1991statistical}). This paper aims to provide a simple algorithm to calculate the upper variance $\overline{V}(X)$.

The upper and lower variances can be widely used in many fields, especially in mathematical finance. Li et al. \cite{li2022upper} used upper and lower variances to obtain a general $G$-Var model with mean-uncertainty, which generalized the $G$-Var model with zero-mean in Peng et al. \cite{pyy}. In this paper, we provide a new application of upper variance. Since the upper variance $\overline{V}(X)$ can be regarded as the supremum of the variance over the convex hull of $\cp$ (see Theorem \ref{vpt}), it can be calculated by a quadratic programming problem on the unit simplex in $\br_+^K$ (see Proposition \ref{p1}). Our algorithm is also useful for solving a class of quadratic programming problems, and we can obtain a probabilistic representation for such quadratic programming problems, i.e., the related optimal value is the upper variance under multiple probabilities.

  \par The structure of this paper is as follows. Section 2 recalls the notions and properties of upper and lower variances. The algorithm for the calculation of upper variance is presented in Section 3. Section 4 shows the application of upper variance in quadratic programming.

\section{Preliminaries}

We firstly recall the preliminaries of sublinear expectation theory introduced in Peng \cite{peng2010nonlinear}, which is a powerful tool to deal with problems with uncertainty, i.e., problems under multiple probabilities.

Let $\Omega$ be a given set and $(\Omega,\cf)$ be a measurable space. Let $\cp$ be a set of probability measures on $(\Omega,\cf)$ characterizing Knightian uncertainty. We define the corresponding upper expectation $\pe$ by
$$\pe[X]=\sup_{P\in\cp}E_P[X].$$
Obviously, $\pe$ is a sublinear expectation satisfying
  \begin{itemize}
    \item [(i)]Monotonicity: $\pe[X]\le \pe[Y],$ if $X\le Y$;
    \item [(ii)]Constant preserving : $\pe[c]=c, \forall c\in \mathbb{R}$;
    \item [(iii)]Sub-additivity : $\pe[X+Y]\le \pe[X]+\pe[Y]$;
    \item [(iv)]Positive homogeneity : $\pe[\lambda X]=\lambda \pe[X], \forall \lambda \geq 0$.
  \end{itemize}
We call $(\Omega,\cf,\pe)$ the sublinear expectation space.

In this paper, we denote $\ccp$ the convex hull of $\cp$. It is clear that
  $$\pe[X]=\sup_{P\in\cp}E_P[X]=\sup_{P\in\ccp}E_P[X].$$
The upper expectation $\pe[X]$ and the lower expectation $-\pe[-X]$ of $X$ are denoted by $\overline{\mu}_X$ and $\underline{\mu}_X$ respectively, called upper mean and lower mean of $X$. The interval
 $[\underline{\mu}_X,\overline{\mu}_X]$ characterizes the mean-uncertainty of $X$, denoted by $M_X$.

Now we recall the notion of upper and lower variances which was first introduced by Walley \cite{walley1991statistical} for bounded random variable under coherent prevision, and then generalized by Li et al. \cite{li2022upper}.

Let $(\Omega,\cf,P)$ be a probability space and $X$ be a random variable with finite second moment. The variance of $X$, denoted by $V_P(X)$, is defined as
 $$V_P(X)=E_P[(X-E_P[X])^2].$$
 We note that
 \begin{equation*}
 E_P[(X-\mu)^2]=V_P(X)+(E_P[X]-\mu)^2,
 \end{equation*}
immediately,
 $$V_P(X)=\min_{\mu\in\br}E_P[(X-\mu)^2],$$
then we use $\pe$ instead of $E_P$ to obtain the following definition.

\begin{definition}
    For a random variable $X$ on sublinear expectation space $(\Omega,\mathcal{F},\pe)$ with $\pe[X^2]
    <\infty$, we define the upper variance of $X$ as
    \[\overline{V}(X): = \min_{\mu\in M_X} \{\pe[(X-\mu)^2]\},
     \]
     and the lower variance of $X$ as
     \[ \underline{V}(X): = \min_{\mu\in M_X} \{-\pe[-(X-\mu)^2]\}.
       \]
 \end{definition}
\begin{remark}
Since $\pe[(X-\mu)^2]$ is a strict convex function of $\mu$, there exists a unique $\mu^*\in M_X$ such that
$$\overline{V}(X)=\pe[(X-\mu^*)^2].$$
\end{remark}
It is not hard to prove that
$$\overline{V}(X)=\min_{\mu\in\mathbb{R}}\{\pe[(X-\mu)^2]\}, \ \ \ \underline{V}(X)=\min_{\mu\in\mathbb{R}}\{-\pe[-(X-\mu)^2]\}.$$
More details can be found in Walley \cite{walley1991statistical} and Li et al. \cite{li2022upper}.

 Thanks to the minimax theorem, we have the following variance envelop theorem.
 \begin{theorem}\label{vpt}
    Let $X$ be a random variable on  $(\Omega,\mathcal{F},\pe)$ with
    $\pe[X^2]<\infty$. Then we have
    \begin{itemize}
      \item[(i)] $\overline{V}(X)=\sup_{P\in\ccp}V_P(X)$.
      \item[(ii)] $\underline{V}(X)=\inf_{P\in\ccp}V_P(X)=\inf_{P\in\cp}V_P(X)$.
      %Furthermore, there exists $\mu_*\in M_X$ such that $\underline{V}(X)=-\pe[-(X-\mu_*)^2]$.
    \end{itemize}
  \end{theorem}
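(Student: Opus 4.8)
The plan is to reduce both identities to interchanging the order of two optimizations, using the definitions $\overline{V}(X)=\min_{\mu\in M_X}\pe[(X-\mu)^2]$ and $\underline{V}(X)=\min_{\mu\in M_X}\{-\pe[-(X-\mu)^2]\}$ together with $\pe[\,\cdot\,]=\sup_{P\in\ccp}E_P[\,\cdot\,]$. The central observation is that the map $(\mu,P)\mapsto E_P[(X-\mu)^2]=E_P[X^2]-2\mu E_P[X]+\mu^2$ is strictly convex in $\mu$ and affine (hence both convex and concave) in $P$ over the convex set $\ccp$, and that for fixed $P$ its unconstrained minimizer in $\mu$ is $E_P[X]$, so that $\min_{\mu\in\br}E_P[(X-\mu)^2]=V_P(X)$. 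Moreover $\underline{\mu}_X=-\pe[-X]\le E_P[X]\le\pe[X]=\overline{\mu}_X$ for every $P\in\ccp$, so this minimizer always lies in $M_X=[\underline{\mu}_X,\overline{\mu}_X]$.

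For (i) I would start from $\overline{V}(X)=\min_{\mu\in M_X}\sup_{P\in\ccp}E_P[(X-\mu)^2]$, where the outer domain $M_X$ is already a compact interval. On $M_X\times\ccp$ the integrand is convex and continuous in $\mu$, affine (hence concave) in $P$, and both $M_X$ and $\ccp$ are convex with $M_X$ compact, so Sion's minimax theorem permits the swap $\min_{\mu\in M_X}\sup_{P\in\ccp}=\sup_{P\in\ccp}\min_{\mu\in M_X}$. It then remains only to identify the inner minimum: by the observation above the minimizer $E_P[X]$ lies in $M_X$, so $\min_{\mu\in M_X}E_P[(X-\mu)^2]=V_P(X)$, and (i) follows.

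For (ii) I would first rewrite $-\pe[-(X-\mu)^2]=\inf_{P\in\ccp}E_P[(X-\mu)^2]$, giving $\underline{V}(X)=\inf_{\mu\in\br}\inf_{P\in\ccp}E_P[(X-\mu)^2]$. Here both operations are infima, so they commute with no minimax theorem needed; swapping them and evaluating the inner minimum as $V_P(X)$ yields $\underline{V}(X)=\inf_{P\in\ccp}V_P(X)$. To pass from $\ccp$ to $\cp$, the inclusion $\cp\subseteq\ccp$ gives $\inf_{P\in\ccp}V_P(X)\le\inf_{P\in\cp}V_P(X)$, and for the reverse inequality I would use that $P\mapsto V_P(X)=E_P[X^2]-(E_P[X])^2$ is concave in $P$ (the first term is affine and $(E_P[X])^2$ is convex): for any $P=\sum_i\lambda_i P_i\in\ccp$ with $P_i\in\cp$, concavity gives $V_P(X)\ge\sum_i\lambda_i V_{P_i}(X)\ge\min_i V_{P_i}(X)\ge\inf_{Q\in\cp}V_Q(X)$, whence $\inf_{P\in\ccp}V_P(X)\ge\inf_{P\in\cp}V_P(X)$.

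The main obstacle is the min--sup interchange in (i); everything in (ii) is either a commutation of two infima or a direct consequence of concavity of the variance in $P$. The delicate point in (i) is verifying the hypotheses of the minimax theorem, which is handled by exploiting the affine dependence on $P$ (giving concavity for free) and the compactness of $M_X$ built into the definition, so that no separate localization argument for $\mu$ is required.
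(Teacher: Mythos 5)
Your proposal is correct and follows essentially the same route as the paper: Sion's minimax theorem applied on $M_X\times\ccp$ for (i), and commutation of two infima for (ii). You additionally spell out two details the paper leaves implicit --- that $E_P[X]\in M_X$ for every $P\in\ccp$ (so the inner minimum is indeed $V_P(X)$) and that the concavity of $P\mapsto V_P(X)$ justifies $\inf_{P\in\ccp}V_P(X)=\inf_{P\in\cp}V_P(X)$ --- both of which are accurate and welcome.
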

\begin{proof}
Since the interval $M_X$ is convex and compact, and $\ccp$ is convex, furthermore, $E_P[(X-\mu)^2]$ is a linear function in $P$ and a convex function in $\mu$, by minimax theorem in Sion \cite{sion1958general}, we have
\begin{align*}
\overline{V}(X)&=\min_{\mu\in M_X}\sup_{P\in\cp}E_P[(X-\mu)^2]\\
&=\min_{\mu\in M_X}\sup_{P\in\ccp}E_P[(X-\mu)^2]\\
&=\sup_{P\in\ccp}\min_{\mu\in M_X}E_P[(X-\mu)^2]=\sup_{P\in\ccp}V_P(X).
\end{align*}

It is obvious that
\begin{align*}
\underline{V}(X)&=\min_{\mu\in M_X}\inf_{P\in\cp}E_P[(X-\mu)^2]\\
&=\min_{\mu\in M_X}\inf_{P\in\ccp}E_P[(X-\mu)^2]\\
&=\inf_{P\in\cp}\min_{\mu\in M_X}E_P[(X-\mu)^2]=\inf_{P\in\cp}V_P(X)=\inf_{P\in\ccp}V_P(X).
\end{align*}
\end{proof}

 \begin{remark}

Unlike the envelop theorems in Walley \cite{walley1991statistical} and Li et al. \cite{li2022upper}, we do not require that $\cp$ to be weakly compact. But the convexity of the $\cp$ is necessary, see Example \ref{em1}.
 \end{remark}

\begin{example}\label{em1}
Let $X$ be normally distributed with $X\sim N(0.1,0.4)$ under $P_1$ and $X\sim N(-0.1,0.4)$ under $P_2$. Taking $\cp=\{P_1,P_2\}$, we obtain
$$\overline{V}(X)=V_{P^*}(X)=0.41>0.4=\max\{V_{P_1}(X),V_{P_2}(X)\},$$
where $P^*=\frac{1}{2}(P_1+P_2)$, and
$$\underline{V}(X)=0.4=V_{P_1}(X)=V_{P_2}(X).$$
\end{example}

In this paper, we only consider the case of $\cp$ consisting of finitely many probability measures, i.e., $\cp=\{P_1,\cdots, P_K\}$. In this case,
$$\ccp=\{P_{\bbl}: P_{\bbl}=\lambda_1P_1+\cdots+\lambda_KP_K, \forall {\bbl}=(\lambda_1,\cdots,\lambda_K)^T\in \Delta^K\},$$
where $\Delta^K=\{{\bbl}\in\br^K: \lambda_1+\cdots+\lambda_K=1,
 \lambda_i\geq 0, 1\leq i\leq K\}$.

As pointed out by Walley \cite{walley1991statistical}, it is usually quite easy to calculate lower variance $\underline{V}(X)$ by Theorem \ref{vpt}, because $\underline{V}(X)=\min_{1\leq i\leq K}V_{P_i}(X)$.
For the upper variance $\overline{V}(X)$, there exists $\bbl^*\in\Delta^K$ such that $\overline{V}(X)=V_{P_{\bbl^*}}(X)$, but $P_{\bbl^*}$ is not an extreme point in general (see Example \ref{em1}), so the calculation of upper variance $\overline{V}(X)$ is more difficult.

\section{Calculation of upper variance}

In this section, we propose a simple algorithm to calculate the upper variance $\overline{V}(X)$ under $\cp=\{P_1,\cdots, P_K\}$.

Our goal is to solve the following minimax problem:
\begin{equation}\label{eee1}
\overline{V}(X)=\min_{\mu\in M_X}\max_{1\leq i\leq K}E_{P_i}[(X-\mu)^2].
\end{equation}

We rewrite (\ref{eee1}) as
\begin{equation}\label{eee3}
\overline{V}(X)=\min_{\mu\in M_X}\max_{1\leq i\leq K}(\mu^2-2\mu_i\mu+\kappa_i),
\end{equation}
where $\mu_i=E_{P_i}[X]$ and $\kappa_i=E_{P_i}[X^2]$, $1\leq i\leq K$.

Without loss of generality, we assume $\mu_1\leq\mu_2\leq\cdots\leq\mu_K$.

In order to prove our main theorem, we need following two lemmas. The first lemma characterizes the position of the optimal point for (\ref{eee1}). The second lemma calculates the upper variance for two probability measures.
\begin{lemma}\label{p31}
The unique optimal $\mu^*\in M_X$ in (\ref{eee1}) should satisfy one of the following two conditions.

(1) $\mu^*$ is the minimum of some parabola $f_i(\mu):=\mu^2-2\mu_i\mu+\kappa_i$.

(2) $\mu^*$ is the intersection  of two parabolas $f_i$ and $f_j$, i.e., $f_i(\mu^*)=f_j(\mu^*)$.
\end{lemma}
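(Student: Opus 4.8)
The plan is to reduce the constrained minimax in (\ref{eee1}) to the unconstrained minimization of a single convex function and then read off the two alternatives directly from the first-order optimality condition. First I would introduce the upper envelope
$$g(\mu) := \max_{1\le i\le K} f_i(\mu), \qquad f_i(\mu) = \mu^2 - 2\mu_i\mu + \kappa_i = (\mu-\mu_i)^2 + V_{P_i}(X),$$
so that $\overline{V}(X) = \min_{\mu\in M_X} g(\mu)$. Each $f_i$ is a strictly convex parabola with common leading coefficient $1$ and vertex at $\mu=\mu_i$; hence $g$, being a pointwise maximum of finitely many strictly convex coercive functions, is itself continuous, strictly convex, piecewise parabolic and coercive. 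Consequently $g$ has a unique global minimizer on $\mathbb{R}$, and by the fact recalled earlier that $\overline{V}(X)=\min_{\mu\in\mathbb{R}} g(\mu)$ this minimizer lies in $M_X=[\mu_1,\mu_K]$ and coincides with $\mu^*$. (Indeed $g$ is strictly decreasing on $(-\infty,\mu_1)$ and strictly increasing on $(\mu_K,\infty)$, since there every $f_i'$ has the same sign.) This reduction lets me treat $\mu^*$ as an interior unconstrained minimizer and avoids any boundary bookkeeping.

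Next I would analyze $\mu^*$ through its active set $A:=\{\,i:f_i(\mu^*)=g(\mu^*)\,\}$, the indices at which the maximum is attained, and split according to $|A|$. If $|A|\ge 2$, choose $i\ne j$ in $A$; then $f_i(\mu^*)=f_j(\mu^*)=g(\mu^*)$, so $\mu^*$ is an intersection point of two parabolas and condition (2) holds. If $A=\{i\}$ is a singleton, then by continuity $f_i(\mu^*)>f_k(\mu^*)$ for every $k\ne i$, hence these strict inequalities persist on a neighborhood of $\mu^*$ and $g\equiv f_i$ there. In particular $g$ is differentiable at $\mu^*$ with $g'(\mu^*)=f_i'(\mu^*)$, and since $\mu^*$ is a minimizer this forces $f_i'(\mu^*)=0$, i.e. $\mu^*=\mu_i$, which is exactly the vertex (minimum) of $f_i$; this is condition (1). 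Combining the two cases proves the lemma.

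The only delicate point is the behavior of $g$ at its kinks: where two parabolas cross, $g$ is generally not differentiable, so one cannot naively write $g'(\mu^*)=0$ there. The case split above is precisely what sidesteps this — at a kink we already land in condition (2) and need no derivative, whereas away from the kinks $g$ agrees locally with a single smooth $f_i$ and the elementary stationarity condition applies. If one prefers a uniform phrasing, the same conclusion follows from the optimality condition $0\in\partial g(\mu^*)=\mathrm{co}\{f_i'(\mu^*):i\in A\}$, where a singleton active set again forces $f_i'(\mu^*)=0$. Beyond this bookkeeping I expect no substantive obstacle: strict convexity (uniqueness of $\mu^*$) and coercivity (existence of the minimizer inside $M_X$) carry the argument.
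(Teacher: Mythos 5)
Your proof is correct and follows essentially the same route as the paper's: when the maximizing parabola at $\mu^*$ is unique, the upper envelope locally coincides with that single parabola, so optimality forces its vertex, and otherwise two parabolas meet at $\mu^*$. Your version is somewhat more careful than the paper's (reducing to the unconstrained minimum of the strictly convex envelope disposes of the boundary of $M_X$, and the active-set/subdifferential phrasing makes the kink issue explicit), but the underlying idea is identical.
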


\begin{proof}
If $\mu^*$ is not the intersection of two parabolas, without loss of generality, we assume that
$$f_1(\mu^*)<f_2(\mu^*)<\cdots<f_K(\mu^*).$$
There exists a neighbourhood $O$ of $\mu^*$ such that
$$f_1(x)<f_2(x)<\cdots<f_K(x), \ \ \forall x\in O.$$
Thus
$$\min_{x\in O}\max_{1\leq i\leq K}f_i(x)=\min_{x\in O}f_K(x),$$
$\mu^*$ is the minimum of $f_K$.
\end{proof}

\begin{lemma}\label{lemma32}
The upper variance of $X$ under $\cp=\{P_1, P_2\}$ can be calculated by
$$\overline{V}(X)=\max\{\kappa_1-\mu_1^2,\kappa_2-\mu_2^2,h(\mu_{12})\},$$
where  \begin{align*}
 \mu_{12}=
 \begin{cases}
    (\mu_1\vee\frac{\kappa_{2}-\kappa_{1}}{2(\mu_{2}-\mu_{1})})\wedge\mu_2, & \mu_1<\mu_2, \\
    \mu_1, & \mu_1=\mu_2,
 \end{cases}
\end{align*}
and $h(x)=x^2-2\mu_1x+\kappa_1$.

\end{lemma}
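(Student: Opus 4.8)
The plan is to reduce the problem to a one-dimensional convex minimization and then read off the answer by locating the minimizer. Since $\cp=\{P_1,P_2\}$ with $\mu_1\le\mu_2$, the mean-uncertainty interval is $M_X=[\mu_1,\mu_2]$, so by \eqref{eee3} I must evaluate $\overline{V}(X)=\min_{\mu\in[\mu_1,\mu_2]}g(\mu)$, where $g(\mu)=\max\{f_1(\mu),f_2(\mu)\}$ and $f_i(\mu)=\mu^2-2\mu_i\mu+\kappa_i$. The degenerate case $\mu_1=\mu_2$ is immediate: then $M_X=\{\mu_1\}$ and $g(\mu_1)=\max\{\kappa_1-\mu_1^2,\kappa_2-\mu_1^2\}$, which matches the claimed formula because $\mu_{12}=\mu_1$ forces $h(\mu_{12})=\kappa_1-\mu_1^2$ while $\kappa_2-\mu_2^2=\kappa_2-\mu_1^2$.

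For the main case $\mu_1<\mu_2$, the two parabolas share the same leading coefficient, so their difference $f_2(\mu)-f_1(\mu)=2(\mu_1-\mu_2)\mu+(\kappa_2-\kappa_1)$ is affine and strictly decreasing, with unique root $\mu_0:=\frac{\kappa_2-\kappa_1}{2(\mu_2-\mu_1)}$. Hence $g=f_2$ on $(-\infty,\mu_0]$ and $g=f_1$ on $[\mu_0,\infty)$, and at $\mu_0$ the two branches agree with common value $h(\mu_0)=f_1(\mu_0)$ (recall $h=f_1$). Being a maximum of convex functions, $g$ is convex; since $f_2$ has vertex $\mu_2$ and $f_1$ has vertex $\mu_1\le\mu_2$, inspecting the sign of $g'$ at the endpoints of $[\mu_1,\mu_2]$ identifies the constrained minimizer as the projection $(\mu_1\vee\mu_0)\wedge\mu_2=\mu_{12}$ of $\mu_0$ onto $[\mu_1,\mu_2]$. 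Thus $\overline{V}(X)=g(\mu_{12})$, and it remains to match $g(\mu_{12})$ with $\max\{\kappa_1-\mu_1^2,\kappa_2-\mu_2^2,h(\mu_{12})\}$.

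This I would do in the three subcases $\mu_0\le\mu_1$, $\mu_1<\mu_0<\mu_2$, $\mu_0\ge\mu_2$. When $\mu_1\le\mu_0\le\mu_2$ one has $\mu_{12}=\mu_0$ and $g(\mu_{12})=h(\mu_0)$, and the elementary estimates $h(\mu_0)=f_1(\mu_0)\ge f_1(\mu_1)=\kappa_1-\mu_1^2$ and $h(\mu_0)=f_2(\mu_0)\ge f_2(\mu_2)=\kappa_2-\mu_2^2$ (using the monotonicity of each $f_i$ about its vertex together with $\mu_1\le\mu_0\le\mu_2$) show $h(\mu_{12})$ is the largest of the three. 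When $\mu_0\le\mu_1$ one has $\mu_{12}=\mu_1$ and $g(\mu_1)=f_1(\mu_1)=\kappa_1-\mu_1^2=h(\mu_{12})$, while a one-line computation of $(\kappa_2-\mu_2^2)-(\kappa_1-\mu_1^2)$ using $\kappa_2-\kappa_1\le 2\mu_1(\mu_2-\mu_1)$ yields $\kappa_2-\mu_2^2\le\kappa_1-\mu_1^2$, so the maximum again equals $g(\mu_{12})$; the case $\mu_0\ge\mu_2$ is symmetric, giving $g(\mu_2)=\kappa_2-\mu_2^2$ as the dominant term. The only genuinely delicate point—and the part I would write out carefully—is this last bookkeeping: I must confirm that in each regime the \emph{single} expression $\max\{\kappa_1-\mu_1^2,\kappa_2-\mu_2^2,h(\mu_{12})\}$ selects exactly $g(\mu_{12})$, since in the regime $\mu_0\ge\mu_2$ the term $h(\mu_{12})=h(\mu_2)$ is \emph{not} the answer but is dominated by $\kappa_2-\mu_2^2$. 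Everything else is routine algebra of quadratics.
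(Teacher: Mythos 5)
Your proposal is correct and follows essentially the same route as the paper: both reduce to minimizing the convex piecewise quadratic $\max\{f_1,f_2\}$ over $[\mu_1,\mu_2]$, with the minimizer being either a vertex $\mu_i$ or the crossing point $\frac{\kappa_2-\kappa_1}{2(\mu_2-\mu_1)}$ clipped to the interval. Your version is in fact slightly more complete than the paper's, since you explicitly verify the domination inequalities (e.g.\ that $\kappa_2-\mu_2^2\le\kappa_1-\mu_1^2$ when $\mu_0\le\mu_1$, and that $h(\mu_2)\le\kappa_2-\mu_2^2$ when $\mu_0\ge\mu_2$) which the paper only asserts when it says ``we take $\mu_1$ or $\mu_2$ instead.''
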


\begin{proof}
We consider the optimal $\mu^*$ in two cases as in Lemma \ref{p31}.

In Case (1), there exists $i_0\in\{1,2\}$ such that $\overline{V}(X)=V_{P_{i_0}}(X)=\kappa_{i_0}-\mu_{i_0}^2$ and $\mu^*=\mu_{i_0}$.
%equivalently, the optimal $\bbl$ in (\ref{eee2}) satisfies $\lambda_{i_0}=1$.

In Case (2), since $f_{1}(\mu^*)=f_{2}(\mu^*)$, we have
$$\mu^*=\frac{\kappa_{2}-\kappa_{1}}{2(\mu_{2}-\mu_{1})},$$
and
\begin{align*}
\overline{V}(X)=h(\mu^*)=\frac{\mu_{2}\kappa_{1}-\mu_{1}\kappa_{2}}{\mu_{2}-\mu_{1}}+\frac{(\kappa_{1}-\kappa_{2})^2}{4(\mu_{1}-\mu_{2})^2}.
\end{align*}
In this case, we further require that
$$\mu^*\in[\mu_{1},\mu_{2}], \ \ \ \ \mu_{1}<\mu_{2}.$$

%(a) and (b) in Figure 1 describe Case (1) and (2) respectively.
%
%
%\begin{figure}[h]
%\centering	
%%		\subfigure[]{\label{fig2a}\includegraphics[scale=0.4]{var1.jpg}}
%%\quad
%	\subfigure[]{\label{fig2b}\includegraphics[scale=0.16]{var2.jpg}}
%\quad
%	\subfigure[]{\label{fig2c}\includegraphics[scale=0.16]{var3.jpg}}
%	
%\caption{Positions of intersections for parabolas.}\label{fig2}
%\end{figure}

In fact, if $\frac{\kappa_{2}-\kappa_{1}}{2(\mu_{2}-\mu_{1})}\notin[\mu_1,\mu_2]$, then we have
$$h(\mu_{1})\vee h(\mu_2)\leq\max\{\kappa_1-\mu_1^2,\kappa_2-\mu^2_2\}.$$
We  take $\mu_1$ or $\mu_2$ instead of $\frac{\kappa_{2}-\kappa_{1}}{2(\mu_{2}-\mu_{1})}$ in this case.

Finally, we obtain
$$\overline{V}(X)=\max\{\kappa_1-\mu_1^2,\kappa_2-\mu_2^2,h(\mu_{12})\}.$$

\end{proof}

\begin{theorem}\label{th32}
The upper variance of $X$ under $\cp=\{P_1,\cdots, P_K\}$ can be calculated by
$$\overline{V}(X)=\max\lt\{\max_{1\leq i\leq K}(\kappa_i-\mu_i^2),\max_{1\leq i<j\leq K}h_{ij}(\mu_{ij})\rt\},$$
where  \begin{align*}
 \mu_{ij}=
 \begin{cases}
    (\mu_i\vee\frac{\kappa_{j}-\kappa_{i}}{2(\mu_{j}-\mu_{i})})\wedge\mu_j, & \mu_i<\mu_j, \\
    \mu_i, & \mu_i=\mu_j,
 \end{cases}  \ \ \ \ \ 1\leq i<j\leq K
\end{align*}
and $h_{ij}(x)=x^2-2\mu_ix+\kappa_i$.
\end{theorem}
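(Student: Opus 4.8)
The plan is to reduce the $K$-measure problem to the pairwise problems already solved in Lemma \ref{lemma32}. Write $F(\mu):=\max_{1\le i\le K}f_i(\mu)$, a strictly convex function whose unique minimizer is the optimal $\mu^*$, so that $\overline{V}(X)=F(\mu^*)$. For a pair $i<j$ let $\overline{V}_{ij}(X)$ denote the upper variance of $X$ under $\{P_i,P_j\}$; by Lemma \ref{lemma32} this equals $\max\{\kappa_i-\mu_i^2,\kappa_j-\mu_j^2,h_{ij}(\mu_{ij})\}$. Since each term $\kappa_i-\mu_i^2$ occurs inside $\overline{V}_{ij}(X)$ for every partner $j$, the right-hand side of the theorem is precisely $\max_{1\le i<j\le K}\overline{V}_{ij}(X)$. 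Thus the theorem is equivalent to the clean assertion $\overline{V}(X)=\max_{1\le i<j\le K}\overline{V}_{ij}(X)$: the upper variance under all $K$ measures is the largest of the pairwise upper variances. I would prove the two inequalities separately (assuming $K\ge 2$, the case $K=1$ being immediate).

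For ``$\ge$'', I would note that discarding parabolas only lowers the upper envelope: for any fixed pair, $\max_{1\le k\le K}f_k(\mu)\ge\max_{k\in\{i,j\}}f_k(\mu)$ for all $\mu$, and minimizing both sides over $\br$ gives $\overline{V}(X)\ge\overline{V}_{ij}(X)$. Taking the maximum over all pairs yields $\overline{V}(X)\ge\max_{i<j}\overline{V}_{ij}(X)$.

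For ``$\le$'', I would invoke Lemma \ref{p31}. In Case (1), $\mu^*$ is the vertex of the top parabola $f_{i_0}$, so $\overline{V}(X)=\kappa_{i_0}-\mu_{i_0}^2$, which is already one of the terms on the right-hand side. In Case (2), the real work is to select the correct pair: among the parabolas active at $\mu^*$ (those attaining $F(\mu^*)$), let $a$ and $b$ index those with the smallest and largest vertex abscissa $\mu_a,\mu_b$. Since $\mu^*$ minimizes the convex $F$, the optimality condition $F'_-(\mu^*)\le 0\le F'_+(\mu^*)$ translates, via $f_k'(\mu^*)=2(\mu^*-\mu_k)$ and the description of one-sided derivatives of a max of smooth functions, into $\mu_a\le\mu^*\le\mu_b$, with $f_a(\mu^*)=f_b(\mu^*)=F(\mu^*)$; the genuinely two-parabola situation is $\mu_a<\mu_b$. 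I would then check that $\mu^*$ also minimizes the single-pair envelope $g:=\max\{f_a,f_b\}$: because $f_a'(\mu^*)\ge 0$ and $f_b'(\mu^*)\le 0$ with both active, one has $g'_-(\mu^*)\le 0\le g'_+(\mu^*)$, so the convex $g$ attains its minimum at $\mu^*$. Hence $\overline{V}_{ab}(X)=g(\mu^*)=F(\mu^*)=\overline{V}(X)$, and since $\overline{V}_{ab}(X)\le\max_{i<j}\overline{V}_{ij}(X)$ the desired upper bound follows.

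The main obstacle is exactly this last step of Case (2): showing that the global optimum of the full envelope is simultaneously the optimum of an appropriately chosen two-parabola subproblem. The structural fact making it work is that all $f_i$ share the same leading coefficient, so each pairwise difference $f_i-f_j$ is affine and the active set at $\mu^*$ can be ``sandwiched'' between its leftmost- and rightmost-vertex members; this is what guarantees the selected pair $\{a,b\}$ reproduces the value $F(\mu^*)$ exactly rather than undershooting it. Once this is in place, the two inequalities combine to give $\overline{V}(X)=\max\{\max_{1\le i\le K}(\kappa_i-\mu_i^2),\ \max_{1\le i<j\le K}h_{ij}(\mu_{ij})\}$, as claimed.
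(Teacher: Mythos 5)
Your proof is correct and follows essentially the same route as the paper: the lower bound by noting that discarding parabolas can only decrease the minimax value, and the upper bound by invoking Lemma \ref{p31} to reduce to a single-parabola vertex or a two-parabola intersection handled by Lemma \ref{lemma32}. Your Case (2) argument---selecting the two active parabolas with extreme vertex abscissae and checking via one-sided derivatives that $\mu^*$ also minimizes the pairwise envelope---merely supplies details that the paper's terse appeal to Lemma \ref{p31} leaves implicit.
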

\begin{proof}

For $1\leq i<j\leq K$, let $\overline{V}_{ij}(X)$ be the upper variance under two probability measures $P_i$ and $P_j$. Then by Lemma \ref{lemma32}, we have
$$\overline{V}_{ij}(X)=\max\{\kappa_i-\mu_i^2, \kappa_j-\mu_j^2, h_{ij}(\mu_{ij})\}.$$

It is obvious that $\overline{V}(X)\geq \overline{V}_{ij}(X), \ 1\leq i<j\leq K$, we obtain
$$\overline{V}(X)\geq\max\lt\{\max_{1\leq i\leq K}(\kappa_i-\mu_i^2),\max_{1\leq i<j\leq K}h_{ij}(\mu_{ij})\rt\}.$$

We note that $\kappa_i-\mu_i^2=\min_{\mu\in\br}f_i(\mu)$, $1\leq i\leq K$ and $h_{ij}(\mu_{ij})=f_i(\mu_{ij})=f_j(\mu_{ij})$ if $\mu_{ij}=\frac{\kappa_j-\kappa_i}{2(\mu_j-\mu_i)}$, $1\leq i<j\leq K$.

By Lemma \ref{p31},
$$\overline{V}(X)\leq\max\lt\{\max_{1\leq i\leq K}(\kappa_i-\mu_i^2),\max_{1\leq i<j\leq K}h_{ij}(\mu_{ij})\rt\}.$$

\end{proof}

\begin{corollary}
The upper variance of $X$ under $\cp=\{P_1,\cdots, P_K\}$ can be calculated by
$$\overline{V}(X)=\max_{1\leq i<j\leq K}\lt\{\overline{V}_{ij}(X)\rt\},$$
where $\overline{V}_{ij}(X)$ is the upper variance under $P_i$ and $P_j$, $1\leq i<j\leq K$.
\end{corollary}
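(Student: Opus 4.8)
The plan is to establish the two inequalities separately, drawing the ``$\geq$'' direction from a monotonicity (subset) argument and the ``$\leq$'' direction from the explicit formula already proved in Theorem \ref{th32} together with the two-measure formula of Lemma \ref{lemma32}.

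First I would record the easy direction. For any pair $i<j$ we have $\cp\supseteq\{P_i,P_j\}$, so the maximization defining $\overline{V}(X)$ is over a larger family than the one defining $\overline{V}_{ij}(X)$; hence $\overline{V}(X)\geq\overline{V}_{ij}(X)$. This is precisely the observation already used in the proof of Theorem \ref{th32}, and taking the maximum over all pairs yields $\overline{V}(X)\geq\max_{1\leq i<j\leq K}\overline{V}_{ij}(X)$. No additional work is needed here beyond citing that step.

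For the reverse inequality I would start from the closed form in Theorem \ref{th32},
\[
\overline{V}(X)=\max\lt\{\max_{1\leq i\leq K}(\kappa_i-\mu_i^2),\ \max_{1\leq i<j\leq K}h_{ij}(\mu_{ij})\rt\},
\]
and show that every term appearing inside this outer maximum is dominated by some pairwise upper variance. By Lemma \ref{lemma32} we have $\overline{V}_{ij}(X)=\max\{\kappa_i-\mu_i^2,\kappa_j-\mu_j^2,h_{ij}(\mu_{ij})\}$, so each $h_{ij}(\mu_{ij})\leq\overline{V}_{ij}(X)$ directly, giving $\max_{i<j}h_{ij}(\mu_{ij})\leq\max_{i<j}\overline{V}_{ij}(X)$. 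For the single-index terms, since $K\geq 2$ each index $i$ lies in at least one pair, say with some $j\neq i$, and then $\kappa_i-\mu_i^2\leq\overline{V}_{ij}(X)\leq\max_{i<j}\overline{V}_{ij}(X)$; taking the maximum over $i$ gives $\max_i(\kappa_i-\mu_i^2)\leq\max_{i<j}\overline{V}_{ij}(X)$. Combining the two bounds shows the outer maximum is at most $\max_{i<j}\overline{V}_{ij}(X)$, i.e. $\overline{V}(X)\leq\max_{1\leq i<j\leq K}\overline{V}_{ij}(X)$, and together with the first step this yields the claimed equality.

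I do not expect a genuine obstacle: the result is essentially a repackaging of Theorem \ref{th32} into a maximum of two-measure problems, and the argument is a term-by-term comparison. The only point requiring mild attention is the single-index terms $\kappa_i-\mu_i^2$, which must be absorbed into a pairwise term; this is where the hypothesis $K\geq 2$ (so that every index belongs to some admissible pair) is silently used, and I would make sure to flag it rather than leave it implicit.
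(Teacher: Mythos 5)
Your argument is correct and is precisely the reasoning the paper leaves implicit: the corollary is stated without proof as an immediate consequence of Theorem \ref{th32} and Lemma \ref{lemma32}, and your two-inequality comparison (monotonicity for one direction, term-by-term domination of the closed-form maximum for the other) is exactly how it follows. Your remark about needing $K\geq 2$ so that every single-index term $\kappa_i-\mu_i^2$ is absorbed into some pairwise $\overline{V}_{ij}(X)$ is a fair point of care, though it is automatic in the setting of the corollary.
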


Now we present our algorithm as following. Given a random variable $X$ under $\cp=\{P_1,\cdots, P_K\}$, we calculate $\mu_i=E_{P_i}[X]$ and $\kappa_i=E_{P_i}[X^2]$, $1\leq i\leq K$.

\noindent{\bf{Algorithm}:}

\noindent Step (1): We sort $\mu_1\leq \mu_2\leq \cdots\leq \mu_K$.

\noindent Step (2): For $1\leq i<j\leq K$, we calculate $\mu_{ij}$ as
 \begin{align*}
 \mu_{ij}=
 \begin{cases}
    (\mu_i\vee\frac{\kappa_{j}-\kappa_{i}}{2(\mu_{j}-\mu_{i})})\wedge\mu_j, & \mu_i<\mu_j, \\
    \mu_i, & \mu_i=\mu_j.
 \end{cases}
\end{align*}

\noindent Step (3): Output
$$\overline{V}(X)=\max\lt\{\max_{1\leq i\leq K}(\kappa_i-\mu_i^2),\max_{1\leq i<j\leq K}h_{ij}(\mu_{ij})\rt\},$$
where $h_{ij}(x)=x^2-2\mu_ix+\kappa_i$.

In addition, the lower variance is calculated simply by
$$\underline{V}(X)=\min_{1\leq i\leq K}(\kappa_i-\mu_i^2).$$

In practice, $\mu_i$ and $\kappa_i$ can be easily estimated from data. For example, let $X$ be the daily return of one stock. In the real market, we can obtain the daily return data $\{x_i\}_{i\in I}$ (resp. $\{x_j\}_{j\in J}$) from bull (resp. bear) market, where $I$ (resp. $J$) denote the periods of bull (resp. bear) market. The we can estimate the sample means and variances as
$$\hat{\mu}_1=\frac{\sum_{i\in I}x_i}{|I|}, \ \ \ \hat{\mu}_2=\frac{\sum_{j\in J}x_j}{|J|},$$
and
$$\hat{\sigma}^2_1=\frac{\sum_{i\in I} (x_i-\hat{\mu}_1)^2}{|I|-1}, \ \ \ \hat{\sigma}^2_2=\frac{\sum_{j\in J} (x_j-\hat{\mu}_2)^2}{|J|-1}.$$

Then we take $\mu_i=\hat{\mu}_i$ and $\kappa_i=\hat{\sigma}^2_i+\mu_i^2$ ($i=1,2$) to calculate the upper and lower variances.
%It is obvious that $|I|\leq \frac{K(K-1)}{2}$, where $|I|$ is the cardinality of the set $I$. Thus we provide a polynomial time algorithm.

%In fact, if $\frac{\kappa_{j}-\kappa_{i}}{2(\mu_{j}-\mu_{i})}\notin[\mu_i,\mu_j]$, we could take $\mu_i$ or $\mu_j$ instead of $\frac{\kappa_{j}-\kappa_{i}}{2(\mu_{j}-\mu_{i})}$, we provide our algorithm as following:

\section{Application: Quadratic programming}

By Theorem \ref{vpt}, (\ref{eee1}) is equivalent to the following convex quadratic programming problem.

\begin{proposition}\label{p1}
\begin{equation}\label{eee2}
\overline{V}(X)=\max_{\bbl\in\Delta^K}(\bbl^T\bbk-(\bbl^T\bbm)^2),
\end{equation}
where $\bbk=(E_{P_1}[X^2], \cdots, E_{P_K}[X^2])^T$ and  $\bbm=(E_{P_1}[X], \cdots, E_{P_K}[X])^T$.
\end{proposition}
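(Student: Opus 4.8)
The plan is to read off the result directly from the variance envelope theorem. By Theorem \ref{vpt}(i) we already know that $\overline{V}(X)=\sup_{P\in\ccp}V_P(X)$, so the entire content of the proposition is to re-express this supremum over the convex hull $\ccp$ as an explicit optimization over the simplex $\Delta^K$. First I would invoke the parametrization of $\ccp$ recorded in Section~2, namely that every $P\in\ccp$ is of the form $P_{\bbl}=\lambda_1P_1+\cdots+\lambda_KP_K$ for a unique $\bbl\in\Delta^K$, so that the supremum over $\ccp$ becomes a supremum over $\bbl\in\Delta^K$.

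The key computational step is to evaluate $V_{P_{\bbl}}(X)$ in terms of $\bbl$. Since $\pe[X^2]<\infty$ guarantees each $E_{P_i}[X]$ and $E_{P_i}[X^2]$ is finite, linearity of the classical expectation under the mixture $P_{\bbl}$ gives
\begin{align*}
E_{P_{\bbl}}[X]=\sum_{i=1}^{K}\lambda_i E_{P_i}[X]=\bbl^T\bbm, \qquad
E_{P_{\bbl}}[X^2]=\sum_{i=1}^{K}\lambda_i E_{P_i}[X^2]=\bbl^T\bbk.
\end{align*}
Hence $V_{P_{\bbl}}(X)=E_{P_{\bbl}}[X^2]-\bigl(E_{P_{\bbl}}[X]\bigr)^2=\bbl^T\bbk-(\bbl^T\bbm)^2$. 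This already exhibits the objective as a concave quadratic function of $\bbl$ (linear term minus a square of a linear form), which is what makes (\ref{eee2}) a convex quadratic programming problem.

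Combining the two steps yields $\overline{V}(X)=\sup_{\bbl\in\Delta^K}\bigl(\bbl^T\bbk-(\bbl^T\bbm)^2\bigr)$, and the supremum may be written as a maximum because $\Delta^K$ is compact and the objective is continuous (indeed concave) in $\bbl$. I do not expect any substantial obstacle here: the only points needing a word of justification are that the correspondence $\bbl\mapsto P_{\bbl}$ surjects onto $\ccp$ (so that nothing is lost in passing from $\sup_{P\in\ccp}$ to $\sup_{\bbl\in\Delta^K}$) and that the maximizer is attained by compactness. The one genuinely load-bearing ingredient is Theorem~\ref{vpt}(i) itself, which supplies the identity $\overline{V}(X)=\sup_{P\in\ccp}V_P(X)$; everything after that is the elementary bookkeeping that the first and second moments are affine in the mixing weights.
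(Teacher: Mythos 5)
Your proposal is correct and follows essentially the same route as the paper: invoke Theorem \ref{vpt}(i), parametrize $\ccp$ by $\Delta^K$, and compute $V_{P_{\bbl}}(X)=\bbl^T\bbk-(\bbl^T\bbm)^2$ by linearity of moments in the mixture weights. Your added remarks on surjectivity of $\bbl\mapsto P_{\bbl}$ and attainment of the supremum by compactness are fine (if anything, slightly more careful than the paper), though the word ``unique'' is unnecessary and not needed for the argument.
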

\begin{proof}
By Theorem \ref{vpt}, we have
$$\overline{V}(X)=\max_{\bbl\in\Delta^K}V_{P_\bbl}(X).$$
It is easily seen that
\begin{align*}
V_{P_\bbl}(X)=E_{P_\bbl}[X^2]-(E_{P_\bbl}[X])^2=\bbl^T\bbk-(\bbl^T\bbm)^2.
\end{align*}
\end{proof}

It can be seen that (\ref{eee2}) is a convex quadratic programming problem. There are many numerical algorithms to solve such a problem, e.g., the polynomial-time interior-point algorithm (Nesterov and Nemirovski \cite{Nesterov1994InteriorpointPA}) and accelerated gradient method (Nesterov \cite{Nesterov2004}), etc. However,  we only obtain approximate solutions by most existing algorithms. In this section, we provide a simple method to solve such a problem exactly by the means of upper variance under multiple probabilities.

We have the following theorem.

\begin{theorem}
Given $\bbm=(\mu_1,\cdots,\mu_K)^T\in\br^K$ with $\mu_1\leq\mu_2\leq\cdots\leq\mu_K$ and $\bbk=(\kappa_1,\cdots,\kappa_K)^T\in\br^K$, the solution of the following maximize problem:
\begin{equation*}
V=\max_{\bbl\in\Delta^K}(\bbl^T\bbk-(\bbl^T\bbm)^2)
\end{equation*}
is given by
$$V=\max\lt\{\max_{1\leq i\leq K}(\kappa_i-\mu_i^2),\max_{1\leq i<j\leq K}h_{ij}(\mu_{ij})\rt\},$$
where  \begin{align*}
 \mu_{ij}=
 \begin{cases}
    (\mu_i\vee\frac{\kappa_{j}-\kappa_{i}}{2(\mu_{j}-\mu_{i})})\wedge\mu_j, & \mu_i<\mu_j, \\
    \mu_i, & \mu_i=\mu_j,
 \end{cases}  \ \ \ \ \ 1\leq i<j\leq K
\end{align*}
and $h_{ij}(x)=x^2-2\mu_ix+\kappa_i$.

If there exists $i_0$ such that $V=\kappa_{i_0}-\mu_{i_0}^2$, then the optimal $\bbl^*$ is given by $\lambda_{i_0}^*=1$ and $\lambda_{j}^*=0$, $j\neq i_0$. Otherwise, there exists $1\leq i_0<j_0\leq K$ such that $V=h_{i_0j_0}(\mu_{i_0j_0})$, then the optimal $\bbl^*$ is given by $\lambda_{i_0}^*=\frac{\mu_{j_0}}{\mu_{j_0}-\mu_{i_0}}+\frac{\kappa_{i_0}-\kappa_{j_0}}{2(\mu_{i_0}-\mu_{j_0})^2}$, $\lambda_{j_0}^*=1-\lambda_{i_0}^*$ and $\lambda_j^*=0$, $j\neq i_0,j_0$.

\end{theorem}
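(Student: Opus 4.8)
The plan is to collapse the $K$-dimensional concave maximization to a one-dimensional problem, exploiting that the objective $g(\bbl):=\bbl^T\bbk-(\bbl^T\bbm)^2$ depends on $\bbl$ only through the pair $(m,k):=(\bbl^T\bbm,\bbl^T\bbk)$. The map $\bbl\mapsto(\bbl^T\bbm,\bbl^T\bbk)$ is affine and sends the vertex $e_i$ of $\Delta^K$ to $(\mu_i,\kappa_i)$, so it carries $\Delta^K$ onto the convex polygon $C:=\mathrm{conv}\{(\mu_i,\kappa_i):1\le i\le K\}\subset\br^2$. Hence
\[
V=\max_{(m,k)\in C}(k-m^2).
\]
Since $k\mapsto k-m^2$ is strictly increasing, the maximum is attained on the upper boundary of $C$; and every point of that boundary lies on a single edge, hence is a convex combination of (at most) two of the points $(\mu_i,\kappa_i)$. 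This is precisely the geometric reason why the optimizer $\bbl^*$ can be taken supported on at most two indices. (In the probabilistically realizable case $\kappa_i\ge\mu_i^2$ the value formula is also just Theorem~\ref{vpt} transported through Proposition~\ref{p1}, but the present argument works for arbitrary $\bbk\in\br^K$ and simultaneously produces $\bbl^*$.)

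Next I would treat the one- and two-support cases. A single vertex $(\mu_i,\kappa_i)$ contributes the value $\kappa_i-\mu_i^2$ and corresponds to $\bbl^*=e_i$. For a pair $i<j$ with $\mu_i<\mu_j$ (the case $\mu_i=\mu_j$ degenerates to a vertex), the edge $[(\mu_i,\kappa_i),(\mu_j,\kappa_j)]$ is parametrized by $m\in[\mu_i,\mu_j]$ via the secant line $L_{ij}(m)=\kappa_i+\frac{\kappa_j-\kappa_i}{\mu_j-\mu_i}(m-\mu_i)$, and we must maximize the downward parabola $L_{ij}(m)-m^2$ over $[\mu_i,\mu_j]$. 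Its unconstrained vertex sits at $\frac{\kappa_j-\kappa_i}{2(\mu_j-\mu_i)}$, so the constrained maximizer is exactly $\mu_{ij}=(\mu_i\vee\frac{\kappa_j-\kappa_i}{2(\mu_j-\mu_i)})\wedge\mu_j$; a short computation—identical to the intersection computation in Lemma~\ref{lemma32}, where $L_{ij}(m^*)-(m^*)^2$ reduces to $(m^*)^2-2\mu_i m^*+\kappa_i$—shows the corresponding value is $h_{ij}(\mu_{ij})$. Taking the maximum over all vertices and all pairs yields the stated value formula: the pair realizing the upper-boundary optimum attains $V$, while any pair whose segment lies strictly below the upper hull gives $h_{ij}(\mu_{ij})\le V$, so adding those extra (dominated) terms to the outer maximum is harmless.

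Finally, to identify $\bbl^*$ in the pair case I would invert $m=\lambda_{i_0}\mu_{i_0}+(1-\lambda_{i_0})\mu_{j_0}$ at $m=\mu_{i_0j_0}$, obtaining $\lambda_{i_0}^*=\frac{\mu_{j_0}-\mu_{i_0j_0}}{\mu_{j_0}-\mu_{i_0}}$; substituting the interior value $\mu_{i_0j_0}=\frac{\kappa_{j_0}-\kappa_{i_0}}{2(\mu_{j_0}-\mu_{i_0})}$ and rearranging gives the announced $\lambda_{i_0}^*=\frac{\mu_{j_0}}{\mu_{j_0}-\mu_{i_0}}+\frac{\kappa_{i_0}-\kappa_{j_0}}{2(\mu_{i_0}-\mu_{j_0})^2}$. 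The step needing the most care is exactly this last reduction: I must argue that when $V$ is genuinely a pair value (and not already a vertex value) the clamp defining $\mu_{ij}$ is inactive, i.e. $\mu_{i_0j_0}=\frac{\kappa_{j_0}-\kappa_{i_0}}{2(\mu_{j_0}-\mu_{i_0})}\in(\mu_{i_0},\mu_{j_0})$, so that $\lambda_{i_0}^*\in(0,1)$ and the substitution above is legitimate. If the clamp were active, the parabola's maximum over $[\mu_{i_0},\mu_{j_0}]$ would occur at an endpoint and equal a vertex value $\kappa_\ell-\mu_\ell^2$ already captured by the first term of the outer maximum, so that branch would never define the optimizer. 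This dichotomy is the abstract counterpart of the clamping discussion at the end of Lemma~\ref{lemma32}.
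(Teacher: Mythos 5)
Your argument is correct, but it takes a genuinely different route from the paper's. The paper proves the value formula by pulling the quadratic program back to the minimax problem (\ref{eee1}): when $\kappa_i-\mu_i^2>0$ for all $i$ it realizes $(\mu_i,\kappa_i)$ as moments of a random variable, invokes Theorem~\ref{vpt} (Sion's minimax theorem) together with the algorithmic Theorem~\ref{th32} (itself resting on Lemma~\ref{p31} and Lemma~\ref{lemma32}, which work in the $\mu$-variable), handles general $\bbk$ by the shift $\bar{\bbk}=\bbk-C+1$, and reads off the optimal weights from the two-measure subproblem (\ref{eee4}). You instead observe that the objective factors through the affine map $\bbl\mapsto(\bbl^T\bbm,\bbl^T\bbk)$, whose image of $\Delta^K$ is the planar polygon $\mathrm{conv}\{(\mu_i,\kappa_i)\}$, and you maximize $k-m^2$ over its upper hull edge by edge. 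This is more elementary and self-contained: it needs no minimax theorem, no probabilistic realization, and no shifting trick; it explains geometrically why an optimizer supported on at most two indices exists; and it yields $\bbl^*$ by inverting the edge parametrization, recovering the same formula the paper obtains from (\ref{eee4}), including the observation that in the ``otherwise'' branch the clamp must be inactive so $\lambda^*_{i_0}\in(0,1)$. One small point to tighten: when the clamp binds at the right endpoint, i.e. $\frac{\kappa_j-\kappa_i}{2(\mu_j-\mu_i)}\ge\mu_j$, the maximum of $L_{ij}(m)-m^2$ over the edge is $\kappa_j-\mu_j^2$, which is \emph{not} equal to $h_{ij}(\mu_j)=\mu_j^2-2\mu_i\mu_j+\kappa_i$, so your sentence ``the corresponding value is $h_{ij}(\mu_{ij})$'' is literally false in that subcase; to justify keeping the term $h_{ij}(\mu_j)$ in the outer maximum you still need the one-line estimate $h_{ij}(\mu_j)\le\kappa_j-\mu_j^2$, which follows from $\kappa_j-\kappa_i\ge 2\mu_j(\mu_j-\mu_i)$ and is exactly the clamping inequality asserted inside Lemma~\ref{lemma32}. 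With that line made explicit, both directions of the value identity and the description of $\bbl^*$ are complete.
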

\begin{proof}
We only give the sketch of the proof. Let $C=\min_{1\leq i\leq K}\{\kappa_i-\mu_i^2\}$.

(1) $C>0$.  In this case, (\ref{eee3}) is equivalent to (\ref{eee1}). $V$ can be calculated by Theorem \ref{th32}. We only need to consider the optimal $\bbl^*$.

If there exists $i_0$ such that $V=\kappa_{i_0}-\mu_{i_0}^2$, then it is clear that $\lambda_{i_0}^*=1$ and $\lambda_{j}^*=0$ for $j\neq i_0$.

If there exists $1\leq i_0<j_0\leq K$ such that $V=h_{i_0j_0}(\mu_{i_0j_0})$, we consider the following maximize problem:
\begin{equation}\label{eee4}
\max_{\lambda_{i_0}+\lambda_{j_0}=1} \left\{\lambda_{i_0}\kappa_{i_0}+\lambda_{j_0}\kappa_{j_0}-(\lambda_{i_0}\mu_{i_0}+\lambda_{j_0}\mu_{j_0})^2\right\}.
\end{equation}
The optimal solution of (\ref{eee4}) is
$$\lambda_{i_0}=\frac{\mu_{j_0}}{\mu_{j_0}-\mu_{i_0}}+\frac{\kappa_{i_0}-\kappa_{j_0}}{2(\mu_{i_0}-\mu_{j_0})^2},$$
and
$$V=\frac{\mu_{j_0}\kappa_{i_0}-\mu_{i_0}\kappa_{j_0}}{\mu_{j_0}-\mu_{i_0}}+\frac{(\kappa_{i_0}-\kappa_{j_0})^2}{4(\mu_{i_0}-\mu_{j_0})^2}=h_{i_0j_0}(\mu_{i_0j_0}).$$
Similar to the proof of Case (2) in Theorem \ref{th32}, we know in this case $0\leq\lambda_{i_0}\leq 1$.

 (2) $C\leq 0$. We consider $\bar{\bbk}=\bbk-C+1$, then $\bar{\kappa_i}-\mu_i^2>0$, $1\leq i\leq K$. Noting that $\bbl^T\bar{\bbk}=\bbl^T\bbk-C+1$, the optimal $\bbl^*$ is same as in the case of $C>0$.
\end{proof}
\begin{remark}
If $\kappa_i-\mu_i^2>0$, $1\leq i\leq K$, then $V$ can be regarded as the upper variance of some random variable $X$ under the set of probability measures $\{P_1,\cdots, P_K\}$ with $E_{P_i}[X]=\mu_i$ and $E_{P_i}[X^2]=\kappa_i$, $1\leq i\leq K$. Indeed, we can take $X$ being normally distributed under each $P_i$ with $P_i\sim N(\mu_i,\kappa_i-\mu_i^2)$, $1\leq i\leq K$.
\end{remark}

%% The correct journal style for \specialsection is all uppercase; a known bug
%% in amsart.cls prevents this, so input must be uppercase until it is fixed.
%\specialsection*{This is a Special Section Head}
%\specialsection*{THIS IS A SPECIAL SECTION HEAD}
%This is an example of a special section head%
%%%%%%%%%%%%%%%%%%%%%%%%%%%%%%%%%%%%%%%%%%%%%%%%%%%%%%%%%%%%%%%%%%%%%%%%
%\footnote{Corresponding author: Xinpeng Li.}%
%%%%%%%%%%%%%%%%%%%%%%%%%%%%%%%%%%%%%%%%%%%%%%%%%%%%%%%%%%%%%%%%%%%%%%%%

\bibliographystyle{amsplain}

\end{document}